\newcommand{\supp}{\mathop{\mathrm{\supp}}}
   \newtheorem{theorem}{Theorem}[section]
   \newtheorem{lemma}[theorem]{Lemma}
   \newtheorem{proposition}[theorem]{Proposition}
   \newtheorem{corollary}[theorem]{Corollary}
\newcommand{\ben}{\begin{enumerate}}
\newcommand{\een}{\end{enumerate}}
\newcommand{\bt}{\begin{theorem}}
\newcommand{\et}{\end{theorem}}
\newcommand{\bl}{\begin{lemma}}
\newcommand{\el}{\end{lemma}}
\newcommand{\bc}{\begin{corollary}}
\newcommand{\ec}{\end{corollary}}
\newcommand{\bp}{\begin{proposition}}
\newcommand{\ep}{\end{proposition}}
\newcommand{\br}{\begin{remark}}
\newcommand{\er}{\end{remark}}
\newcommand{\bpf}{\begin{proof}}
\newcommand{\epf}{\end{proof}}
\newcommand{\be}{\begin{equation}} 
\newcommand{\ee}{\end{equation}}
\newcommand{\beq}{\begin{eqnarray}}
\newcommand{\eeq}{\end{eqnarray}}
\newcommand{\ba}{\begin{array}}
\newcommand{\ea}{\end{array}}
\newcommand{\bi}{\begin{itemize}}
\newcommand{\ei}{\end{itemize}}
\newcommand{\comm}[1]{}
\newcommand \qed {\hskip 6pt\vrule height6pt width5pt depth1pt \bigskip}
\newfont{\msbm}{msbm10 scaled\magstep1}
\newfont{\msbms}{msbm7 scaled\magstep1} 
   \newenvironment{proof}[1][Proof]{\begin{trivlist}
   \item[\hskip \labelsep {\bfseries #1}]}{\end{trivlist}}
   \newenvironment{definition}[1][Definition]{\begin{trivlist}
   \item[\hskip \labelsep {\bfseries #1}]}{\end{trivlist}}
   \newenvironment{remark}[1][Remark]{\begin{trivlist}
   \item[\hskip \labelsep {\bfseries #1}]}{\end{trivlist}}
 \numberwithin{equation}{section}
\newcommand{\rarrow}{\rightarrow}
\begin{document}

\vspace{ 1 in}

\title{Weak solutions to the Navier - Stokes equations}
\author{Ira Herbst}
\date\today

\maketitle
\emph{Mathematics Subject Classification (2020): 35Q35, 35D30}.\\

\emph{Key words: weak solution, H\"{o}lder continuity, Kato approximation.} \\

ABSTRACT: We prove the equivalence of being a Leray-Hopf weak solution of the Navier-Stokes equations in $\mathbb{R}^m, \  m\ge 3$, to satisfying a well known integral equation and use this equation to derive some regularity properties of these weak solutions.

\section{Introduction}
The purpose of this work is to prove that the property of being a Leray-Hopf \cite {JL, EH}  weak solution, $u(t)$, of the Navier-Stokes equations in $\mathbb{R}^m, m \ge 3$, is equivalent to $u(t)$ satisfying the well-known perturbation equation (see Theorem \ref{equivalence})

\begin{equation}\label{perturbation}
u(t) = e^{t\Delta}u_0 -\mathbb{P}  \int_0^t e^{(t-s)\Delta} (u(s)\cdot \nabla u(s)) ds ;  \ a.e.\ t \ge 0
\end{equation} 

including $t = 0$ and to prove some properties of these solutions (see Section \ref{conseq}).  The equivalence is known with some definition (probably weaker than that of Leray and Hopf \cite{JL,EH})  of weak solution (see \cite{LR} and for an earlier proof in $\mathbb{R}^3$ \cite{FLRT}).  We use the definition of weak solution for which Leray \cite {JL}  and Hopf \cite {EH} proved the existence of global in time solutions given an arbitrary divergence free initial velocity $u_0 \in L^2$.  
In  (\ref{perturbation}) $\mathbb{P}$ is the Leray projector onto divergence free fields and $\Delta$ is the Laplacian.  We call (\ref{perturbation}) the perturbation equation because in some sense it treats the Navier-Stokes equations as a perturbation of the heat equation (which in many ways it is not).  We make the following definition:

 \begin{definition}
$u(t)$ is a weak solution of the Navier-Stokes equations in $\mathbb{R}^m$ if for each $T > 0$, $u \in L^\infty([0,T];L^2(\mathbb{R}^m)) \cap L^2([0,T];H^1(\mathbb{R}^m)), \nabla_x \cdot u(t,x) = 0$ (distributional derivatives) and 
\begin{equation} \label{weakeqn} 
\begin{split}
\int_0^t [- (u(s),\partial_s \phi(s))  + (\nabla u(s), \nabla \phi(s)) + (u(s)\cdot \nabla u(s), \phi(s)) ]ds  =  \\
  (u_0,\phi(0)) - (u(t), \phi(t)); \ a.e. \ t \ge 0, \text{including} \ t = 0.
  \end{split}
\end{equation}

We do not incorporate the energy inequality (see \cite{JL, EH}) in our definition of weak solution. \\

Here $(\cdot,\cdot)$ is the usual inner product in $L^2$ where we are abbreviating $(u,v) = \sum_{i=1}^m (u_i,v_i) $ and $(\nabla u, \nabla v) = \sum _{i,j} (\partial_{x_i}u_j,\partial_{x_i}v_j)$ when $u$ and $v$ are vector fields.  Sometimes we use the notation $(f,g)$ when possibly neither $f$ nor $g$ is in $L^2$ but the product $fg$ is in $L^1$.  The test functions, $\phi$, are in $C_0^\infty([0,T) \times \mathbb{R}^m)$ and satisfy $\nabla_x \cdot \phi(s,x) = 0$.  The function $u_0 \in L^2$ is the initial fluid velocity.  Setting $t=0$ in (\ref{weakeqn}) we find $u(0) = u_0$.  The space $H^1$ is the $L^2$ - Sobolev space of functions $f$ with $||\nabla f||_2^2 + ||f||_2^2 < \infty$.  Weak solutions (with the definition above) are known to exist globally, i.e. for all $t\ge 0$,  given an arbitrary divergence free initial velocity $u_0 \in L^2(\mathbb{R}^m)$, see \cite {JL, EH}.
Weak solutions are apriori only defined for almost every $t$, but as is shown in \cite{RRS} for $m=3$ we can choose $u(t)$ for each $t\in [0,\infty) $ so that $u$ is weakly continuous in $L^2$.  For the reader's convenience we give a proof for $m\ge 3$ in Section \ref{testf}. 
\end{definition}
\
In Section \ref{testf} we  extend the set of test functions to $\{\phi \in \mathcal{S}(R^{m+1}): \text{supp} \phi \subset K \times \mathbb{R}^m, \nabla_x \cdot \phi(s,x) = 0 \}$, where $K$ is a compact subset of $(-\infty, T)$ and $\mathcal{S}(\mathbb{R}^{m+1})$ is the Schwartz space of $C^\infty(\mathbb{R}^{m+1})$ functions of rapid decrease.
Then after another small extension, in Section \ref{pert}  we derive (\ref{perturbation}) and show that the validity of this equation is equivalent to the Leray-Hopf definition of weak solution given above.  
In Section 4 we use the perturbation equation to show that $v(t) = u(t) - e^{t\Delta}u_0$ is in $L^1(\mathbb{R}^m)$ and has nearly one distributional space derivative in $L^1$ (in a sense to be made precise).  In addition $v(t)$ is $L^1$ - H\"older continuous in time of order $\alpha$ for any $\alpha < 1/2$ and $L^1$ - H\"older continuous in space of order $r$ for any $r < 1$.    It would be very interesting to know if this regularity can be improved. The implications of these results for $L^p, 1<p <2$ are given.

\section{Extending the space of test functions} \label{testf} 

We use Kato's \cite{K} method of approximating a divergence free field, $f$, by one of compact support.  Let $z_R(x) = \tilde z(R^{-1} \log(|x|^2 +1))$ where $\tilde z \in C^\infty (\mathbb{R})$ with $ \tilde z(t) = 1$ for  $ t  \le 1$ and $\tilde z(t)  = 0$ for $t \ge 2$.  For $k=0$ and $1$, define 
$$Q_k(f)(x) = \int_0^1 t^{m-1-k} f(tx) dt,$$  Kato's approximation of $f$ is
$$f_R(x)_j  = z_R(x) f_j(x)  + \sum_{i=1}^m\partial_i z_R(x) (x\wedge Q_1(f)(x))_{ij} .$$
Here $(x\wedge g(x))_{ij} = x_i g_j(x) - x_j g_i(x)$.  A calculation which we omit gives  

$$\nabla \cdot f_R = z_R \nabla \cdot f + (x\cdot \nabla z_R) \ Q_0(\nabla \cdot f).$$  Thus if $f$ is divergence free, $f_R$ has compact support and is divergence free.  We estimate $||f_R - f||_p $ and $ ||\nabla(f_R - f)||_p$.  First we have ($p'$ is the dual index to $p$) 

$$\partial_iQ_1(f) = Q_0(\partial_i f) ,$$
$$||Q_k(f)||_p \le (-k+ m/p')^{-1} ||f||_p ; \ \infty \ge p > (1-k/m)^{-1}.$$
$$|\partial_i z_R(x)| \le cR^{-1} (|x| + 1)^{-1} , |\partial_i\partial_j z_R(x)| \le c'R^{-1} (|x| + 1)^{-2} .$$

We thus have for $\infty \ge p > m'$,
$$||f_R -f||_p \le ||(1-z_R)f||_p + c\sup_{x  \in \mathbb{R}^m}|x| |\nabla z_R(x)|\ ||Q_1(f)||_p \le ||(1-z_R)f||_p+ c'R^{-1}||f||_p ,$$
$$ ||\nabla(f_R - f)||_p \le ||(1-z_R)\nabla f ||_p + c R^{-1} (||f||_p + ||\nabla f||_p)$$

We will use these bounds for $f(\cdot) = \phi(s,\cdot)$ with $\phi$ taken from the set $  \mathcal{T}_1: = \{\phi \in \mathcal{S}(R^{m+1}): \text{supp} \phi \subset K \times \mathbb{R}^m, \nabla_x \cdot \phi(s,x) = 0 \}$, where $K$ is a compact subset of $(-\infty, T)$. As a final extension, we would like to be able to use test functions of the form $\phi(s,\cdot)  \in \mathcal{T}: = \{\mathbb{P} \phi_0(s,\cdot): \phi_0 \in  \mathcal{S}(R^{m+1}), \text{supp} \phi_0 \subset K \times \mathbb{R}^m \}$ with $K$ a compact subset of $(-\infty, T)$.  That is we drop the requirement that the test function be in $\mathcal{S}(R^{m+1})$ with zero divergence for all $s$ and replace these test function with the stated functions which may be less smooth in Fourier space.
 We use the cutoff function $\chi_r(\xi) = e^{-r/|\xi|^2}$, $r>0$, defined to be $0$ when $\xi = 0$.  If the vector function $f \in \mathcal{S}(\mathbb{R}^m) $ then the function $f_r$ with Fourier transform given by $\chi_r (\xi) \hat f(\xi)$ has the property that $\mathbb{P} f_r $ is also in $ \mathcal{S}(\mathbb{R}^m) $ since $\widehat f_r$ and all its derivatives are zero at the origin so that no singularity arises from applying $\mathbb{P} $.  With $H^k (\mathbb{R}^m) $ the $L^2$ Sobolev space with norm $||f||^2_{H^k}  = \sum _{|\alpha| \le k} \int |\xi^{\alpha} \hat f(\xi)|^2  d\xi = 
 \sum_{|\alpha | \le k}\int  |\partial^{\alpha}f(x)|^2 dx $ we find
 \begin{equation} \label{Papprox} 
 \lim_{r \to 0 } || \mathbb{P} (f - f_r) ||_{H^k(\mathbb{R}^m) } =0.
 \end{equation} 

 Before replacing the test functions in (\ref{weakeqn})  with a larger class we follow the ideas in \cite{RRS} to show that by making a particular definition of $u(t)$  for all time we can ensure that $u$ is $L^2$ weakly continuous.  First choosing $\alpha(s) \in C_0^\infty(\mathbb{R})$ with $\alpha(s) = 1$ for $s\in [t_1,t_2]$ and letting $\tilde{\phi}(s,x)= \alpha(s) \phi(x)$ with  $\phi \in C_0^\infty(\mathbb{R}^m))$ with $\nabla \cdot \phi =0$ we subtract (\ref{weakeqn}) with $t=t_1$ from (\ref{weakeqn}) with $t = t_2$ to obtain 
  
 \begin{equation} \label{t1t2eqn} 
 \int_{t_1}^{t_2} [(\nabla u(s),\nabla \phi) + (u(s)\cdot \nabla u(s),\phi)]ds = (u(t_1), \phi) - (u(t_2),\phi)
 \end{equation} 

 for a.e. $t_1$ and $t_2$ but including $t_1 = 0$.  With $m'$ the dual index to $m$
 $$||u(s)\cdot\nabla u(s)||_{m'} \le c||\nabla u(s)||_2^2.$$ 
 
 This follows from 
 $$||u\cdot \nabla u||_{m'} \le c'||u||_{(2^{-1} - m^{-1})^{-1}} ||\nabla u||_2 \le c  ||\nabla u||_2^2,$$
 where the first inequality is just H\"{o}lder's inequality and the last is a Sobolev inequality.
 
 It is convenient to replace $\int _{t_1}^{t_2} (u(s) \cdot \nabla u(s),\phi)ds$ with  $\int _{t_1}^{t_2} (\mathbb{P} (u(s)\cdot\nabla u(s)),\phi)ds$ and then use the estimates above to replace the class of test functions in (\ref{t1t2eqn}) by $\phi \in \mathcal{S}(\mathbb{R}^m)$.  That this is possible follows easily.   We now would like to show that we can replace the class $\mathcal{S}(\mathbb{R}^m)$ with $B = H^1(\mathbb{R}^m) \cap L^m(\mathbb{R}^m)$, in other words that $ \mathcal{S}(\mathbb{R}^m)$ is dense in $B$.  Let $\chi , \psi \in C_0^\infty(\mathbb{R}^m)$ with $\chi(x) \in [0,1]$, $\chi(x) = 1$ for $|x| \le 1$, and $\psi \ge 0$ with $\int \psi dx = 1$.  Define $\chi_\delta(x) = \chi(\delta x),
 \psi_\epsilon(x) = \epsilon^{-m} \psi(x/\epsilon)$.  Then if $\phi \in B$ , $\phi_{\epsilon,\delta} = \chi_\delta (\psi_\epsilon* \phi ) \in \mathcal{S} $ and converges to $\phi$ in both $H^1$ and $L^m$.

 Then we have 
 \begin{equation} \label{geqn}
 u(t) - u_0 = \int_0^t g(s) ds, a.e. 
 \end{equation} 
 
 where $g(s)  \in B^* $  for almost every $s$.  We have 
 $$\langle g(s), \phi \rangle =  -(\nabla u(s), \nabla \phi) - (\mathbb{P}(u(s)\cdot \nabla u(s)), \phi)$$
 
  so that $|\langle g(s), \phi \rangle| \le c ||\nabla u(s)||_2^2 ||\phi||_m + ||\nabla u(s)||_2 ||\phi||_{H^1}  $ . Thus $||g(s)||_{B^*}$ is integrable over $[0,T]$ for any $T$.  We choose $u(t)$ so that (\ref{geqn})  is satisfied for all $t$.  Then $u(t) - u_0 \in C([0,T], B^*) , T > 0$.  We would like to show that for our choice of $u(t)$, $||u(t)||_2 \le ||u||_{L^\infty([0,T];L^2)} = :c_0$ for all $t \in [0,T]$.  Choose $t_n \rarrow t $ with $||u(t_n)||_2 \le c_0$.  Then there is some subsequence of $\{t_n\}$ for which $u(t_n)$ converges weakly to some $v \in L^2$.  We relabel the subsequence so that $u(t_n)$ converges weakly to $v$ in $L^2.$  Suppose $f \in \mathcal{S}(\mathbb{R}^m)  (\subset B) $.  Then $( u(t_n),f) \rightarrow (v,f) $ and since $u(t) -u_0 \in C([0,T];B^*)$, $(u(t_n),f) \rightarrow (u(t),f)$. Thus $u(t) = v$ and since $ c_0 \ge \limsup ||u(t_n)||_2 \ge ||v||_2$ we have $||u(t)||_2 \le c_0$.  Using the convergence of $u(t_n)-u_0$ to $u(t)-u_0$ in $B^* $ for any sequence $t_n \rightarrow t$ we have $(u(t_n) , f) \rightarrow (u(t),f)$ for $f \in \mathcal{S}$.  Since $||u(t_n) - u(t)||_2 \le 2 c_0$ this follows for all $f \in L^2$.  Thus we have for our choice of $u(t)$,
 
 \begin{lemma} \label{weakcont} 
 $ u(\cdot) $ is $L^2$ - weakly continuous.  
  \end{lemma}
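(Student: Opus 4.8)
The plan is to show that for the representative of $u$ satisfying (\ref{geqn}) for all $t$, the map $t \mapsto (u(t),f)$ is continuous on $[0,T]$ for every $f \in L^2(\mathbb{R}^m)$. Two facts established just above do essentially all the work: first, for this representative $u(t) - u_0 \in C([0,T];B^*)$, so $t \mapsto \langle u(t)-u_0,\phi\rangle$ is continuous for each fixed $\phi \in B = H^1(\mathbb{R}^m) \cap L^m(\mathbb{R}^m)$; second, $\|u(t)\|_2 \le c_0$ uniformly in $t \in [0,T]$. I would first record the elementary embedding $L^2 \hookrightarrow B^*$: since $B \subset L^2$ with $\|\phi\|_2 \le \|\phi\|_B$, each $w \in L^2$ defines an element of $B^*$ by $\phi \mapsto (w,\phi)$, and this embedding is injective because $\mathcal{S} \subset B$ is dense in $L^2$. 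Consequently the $B^*$-pairing $\langle u(t)-u_0,\phi\rangle$ agrees with the $L^2$ inner product $(u(t)-u_0,\phi)$ whenever $\phi \in B$.

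First I would settle the claim on the dense subspace. For $f \in \mathcal{S}(\mathbb{R}^m) \subset B$, the $B^*$-continuity of $u(\cdot)-u_0$ gives $(u(t)-u_0,f) = \langle u(t)-u_0,f\rangle \to \langle u(t_0)-u_0,f\rangle = (u(t_0)-u_0,f)$ as $t \to t_0$, and hence $(u(t),f) \to (u(t_0),f)$. At this stage continuity of the pairing is known only against the restricted, smooth class $f \in \mathcal{S}$.

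To upgrade to an arbitrary $f \in L^2$ I would run the standard boundedness-plus-density argument. Given $\varepsilon > 0$, choose $f_0 \in \mathcal{S}$ with $\|f - f_0\|_2 < \varepsilon$ and split
\[
|(u(t)-u(t_0),f)| \le |(u(t)-u(t_0),f_0)| + \|u(t)-u(t_0)\|_2\,\|f-f_0\|_2 \le |(u(t)-u(t_0),f_0)| + 2c_0\varepsilon,
\]
using the uniform bound $\|u(t)-u(t_0)\|_2 \le 2c_0$. The first term tends to $0$ by the previous paragraph, so $\limsup_{t\to t_0}|(u(t)-u(t_0),f)| \le 2c_0\varepsilon$; letting $\varepsilon \to 0$ yields weak continuity at $t_0$, and since $t_0 \in [0,T]$ was arbitrary (and $T>0$ arbitrary), the lemma follows.

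The final approximation is routine; the genuine content lies in the uniform bound $\|u(t)\|_2 \le c_0$ for the all-time representative, which is exactly what makes the passage from $B^*$-continuity to honest $L^2$-weak continuity legitimate. Indeed, $B^*$-continuity alone controls only the pairings against the dense class $\mathcal{S}$ and says nothing about the behavior on the $L^2$-complement, so without an a priori $L^2$ bound one could not close the three-term estimate. That bound was obtained above by a weak-compactness argument — extracting an $L^2$-weak limit $v$ along a subsequence $t_n \to t$, identifying $v$ with $u(t)$ through $B^*$-continuity tested on $\mathcal{S}$, and invoking weak lower semicontinuity of the norm — and this is the step I regard as the crux rather than the concluding density argument.
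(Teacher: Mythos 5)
Your argument is correct and follows essentially the same route as the paper: continuity of $t \mapsto (u(t),f)$ for $f \in \mathcal{S}$ via the $C([0,T];B^*)$ property of $u(\cdot)-u_0$, then the upgrade to all $f \in L^2$ using the uniform bound $\|u(t)\|_2 \le c_0$ and density of $\mathcal{S}$ in $L^2$. The paper compresses the final density step into one sentence, but the content is identical, including your observation that the weak-compactness argument establishing the uniform bound is where the real work lies.
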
 
 
 We now show how to replace the test functions in (\ref{weakeqn}) with a larger class.  We first see how to replace those test functions in $C_0^\infty([0,T) \times \mathbb{R}^m)$ and satisfying $\nabla_x \cdot \phi(s,x) = 0$ with functions $\phi \in \mathcal{T}_1$.  Consider each term in (\ref{weakeqn})  where $0 <t  <T$:
 With $\phi \in \mathcal{T}_1$, as $R \rightarrow \infty$, 
 $$|\int_0^t (u(s), \partial_s(\phi_R(s,\cdot)) - \phi(s,\cdot)) ds|  \le \int_0^t ||u(s)||_2 (||(1-z_R)\partial_s \phi(s)||_2 + c'R^{-1} ||\partial_s\phi(s)||_2)ds  \rightarrow 0.$$
 Similarly 
 $$|  \int_0^t (\nabla u(s),\nabla (\phi_R(s,\cdot) - \phi(s,\cdot))) ds| $$  $$\le \int_0^t ||\nabla u(s)||_2 (||(1 - z_R) \nabla \phi(s)||_2+ c'' R^{-1}(||\phi(s)||_2 + ||\nabla \phi(s)||_2)) \rightarrow 0 $$
where we use the Schwarz inequality and the fact that $\int _0^t ||\nabla u(s)||_2^2 ds < \infty$.  For the $u\cdot \nabla u$ term we have 

$$ |\int_0^t (u(s)\cdot \nabla u(s), \nabla (\phi_R(s,\cdot) - \phi(s,\cdot))  ds | \le \int_0^t ||u(s)\cdot\nabla u(s)||_{m'} ||\nabla(\phi_R(s,\cdot) - \phi(s,\cdot)) ||_m ds.$$
As we showed above 
$$ ||\nabla(\phi_R(s,\cdot) - \phi(s,\cdot)) ||_p  \rightarrow 0 $$ for all $p \in [2,\infty]$ and this is uniform for $s\in[0,t]$ so this term $\rightarrow 0$. 
The remaining two terms in (\ref{weakeqn}) also converge to what they are supposed to and thus we can use $\phi \in \mathcal{T}_1$ in (\ref{weakeqn}).
Basically the same ideas allows us to replace test functions in $\mathcal{T}_1$ with test functions in $\mathcal{T}$.  We need to show that the terms in (\ref{weakeqn}) with $\phi(s)$ replaced with $\mathbb{P}(\phi_r(s,\cdot) - \phi(s,\cdot) )$ tend to $0$. Here we use that $\mathbb{P}$ is bounded on $L^p(\mathbb{R}^m)$ for $1< p < \infty$.  We omit the details.  Using the $L^2$ - weak continuity of $u(t)$ it is easy to see that (\ref{weakeqn}) is true for all $t$.  

\section{The perturbation equation} \label{pert}

In this section we prove the following theorem.
\begin{theorem}\label{equivalence}
Suppose $u$ is a weak solution (see (\ref{weakeqn}) of the Navier-Stokes equations so that $u \in L^\infty([0,T];L^2(\mathbb{R}^m)) \cap L^2([0,T];H^1(\mathbb{R}^m)), \nabla_x \cdot u(t,x) = 0$ for all $T > 0$ and with $u(0) = u_0, \  0\le t \le T$, 
 \begin{equation}
 \begin{split}
  \int_0^t [- (u(s),\partial_s \phi(s))  + &(\nabla u(s), \nabla \phi(s)) + (u(s)\cdot \nabla u(s), \phi(s)) ]ds    =   \\
                & (u_0,\phi(0)) - (u(t), \phi(t)), \text{a.e.} \ t>0
  \end{split}
 \end{equation}

for all $\phi \in C_0^\infty([0,T) \times \mathbb{R}^m)$ satisfying $\nabla_x \cdot \phi(s,x) = 0$.  Then $u$ satisfies the perturbation equation 
\begin{equation}\label{perturbation'}
u(t) = e^{t\Delta}u_0 -\mathbb{P}  \int_0^t e^{(t-s)\Delta} (u(s)\cdot \nabla u(s)) ds 
\end{equation} 
for a.e. $t\ge 0$, including $t=0$.
Conversely if for all $T>0$, $u \in L^\infty([0,T];L^2(\mathbb{R}^m)) \cap L^2([0,T];H^1(\mathbb{R}^m)), \nabla_x \cdot u(t,x) = 0 $ 
and the perturbation equation (\ref{perturbation'}) holds for a.e. $t \ge 0$ including $t=0$, 
then $u$ is a weak solution of the Navier-Stokes equations.
\end{theorem}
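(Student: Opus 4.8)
The plan is to prove the two implications separately: the forward one (weak solution $\Rightarrow$ (\ref{perturbation'})) by testing against a single, carefully chosen function, and the converse by substituting the Duhamel representation (\ref{perturbation'}) into the weak equation and computing.

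For the forward direction, fix $t \in (0,T)$ and a divergence-free $\psi \in \mathcal{S}(\mathbb{R}^m)$, and consider the function given on $[0,t]$ by $\phi(s,\cdot) = e^{(t-s)\Delta}\psi$ (well defined there since $t-s \ge 0$). Because only the restriction of a test function to $[0,t]$ enters (\ref{weakeqn}) at time $t$, I may smoothly continue $\phi$ past $s=t$ and cut it off before $T$; applying the spatial projection $\mathbb{P}$ to this continuation restores $\nabla_x \cdot \phi = 0$ without altering $\phi$ on $[0,t]$, since $\mathbb{P}$ commutes with $e^{(t-s)\Delta}$ and $\mathbb{P}\psi = \psi$, so the result is an admissible test function in $\mathcal{T}$ by Section \ref{testf}. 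The key point is the exact cancellation
$$(\partial_s+\Delta)\,e^{(t-s)\Delta}\psi = 0,$$
which, after the spatial integration by parts $(\nabla u,\nabla\phi) = -(u,\Delta\phi)$, annihilates the first two terms of (\ref{weakeqn}) on $[0,t]$ and leaves
$$\int_0^t (u(s)\cdot\nabla u(s),\, e^{(t-s)\Delta}\psi)\,ds = (u_0,\, e^{t\Delta}\psi) - (u(t),\psi).$$
Using the self-adjointness of $e^{\tau\Delta}$ and of $\mathbb{P}$, the commutation of $\mathbb{P}$ with $e^{\tau\Delta}$, and $\mathbb{P}\psi = \psi$, the left side equals $\big(\int_0^t e^{(t-s)\Delta}\mathbb{P}(u(s)\cdot\nabla u(s))\,ds,\,\psi\big)$ and the first term on the right equals $(e^{t\Delta}u_0,\psi)$. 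Since the divergence-free elements of $\mathcal{S}(\mathbb{R}^m)$ are dense and both sides of (\ref{perturbation'}) are divergence-free, (\ref{perturbation'}) follows for a.e. $t$; weak continuity (Lemma \ref{weakcont}) and the trivial case $t=0$ give it including $t=0$.

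For the converse, write $w(\tau) = \mathbb{P}(u(\tau)\cdot\nabla u(\tau))$, so that (\ref{perturbation'}) reads $u(s) = e^{s\Delta}u_0 - \int_0^s e^{(s-\tau)\Delta}w(\tau)\,d\tau$. Combining the first two terms of (\ref{weakeqn}) as $-(u(s),(\partial_s+\Delta)\phi(s))$, I substitute this representation and apply Fubini to the resulting double integral over the triangle $\{0\le\tau\le s\le t\}$. The heat identity $\frac{d}{ds}\big[e^{(s-\tau)\Delta}\phi(s)\big] = e^{(s-\tau)\Delta}(\partial_s+\Delta)\phi(s)$ together with the fundamental theorem of calculus then evaluates the inner $s$-integrals explicitly, producing boundary contributions involving $e^{(t-\tau)\Delta}\phi(t) - \phi(\tau)$. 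The piece $(w(\tau),\phi(\tau)) = (u(\tau)\cdot\nabla u(\tau),\phi(\tau))$ (using $\mathbb{P}^* = \mathbb{P}$ and $\nabla\cdot\phi = 0$) exactly cancels the nonlinear term $\int_0^t (u\cdot\nabla u,\phi)\,ds$ already present in (\ref{weakeqn}), while the $e^{(t-\tau)\Delta}\phi(t)$ piece recombines, via (\ref{perturbation'}) itself, into $(u(t) - e^{t\Delta}u_0,\phi(t))$; together with the $e^{s\Delta}u_0$ term, everything collapses to $(u(t),\phi(t)) - (u_0,\phi(0))$, which is precisely the right side of (\ref{weakeqn}).

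I expect the main obstacle to be the rigorous justification of these manipulations under the only available integrability, namely $\|u(s)\cdot\nabla u(s)\|_{m'} \le c\|\nabla u(s)\|_2^2 \in L^1([0,T])$. The danger is that $\Delta e^{\tau\Delta}$ has operator norm of order $\tau^{-1}$ as $\tau\to 0^+$, so one may never let a Laplacian fall on the semigroup near the diagonal $s=\tau$. The remedy, used throughout, is to integrate by parts so that every Laplacian acts on the smooth, compactly supported test function $\phi$; then each factor $e^{(s-\tau)\Delta}$ is a contraction on $L^{m'}$, and pairing the $L^{m'}$ bound on $w$ against the $L^m$ bound on $(\partial_s+\Delta)\phi$ makes the double integral absolutely convergent over the triangle, legitimizing Fubini and the termwise differentiation. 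The analogous delicate point in the forward direction is the admissibility of $e^{(t-s)\Delta}\psi$ as a test function, which is why the smooth extension past $s=t$, the projection $\mathbb{P}$, and the extension of the test-function class in Section \ref{testf} are needed; the convergence of the nonlinear term there rests on the same $L^{m'}$--$L^m$ pairing.
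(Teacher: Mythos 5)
Your proposal is correct and follows essentially the same route as the paper: both rest on testing against $\mathbb{P}e^{(t-s)\Delta}f$ in the extended class $\mathcal{T}$ so that $(\partial_s+\Delta)$ annihilates the linear terms, and your converse is the identical substitution--Fubini--heat-identity computation, recombining the $e^{(t-\tau)\Delta}\phi(t)$ boundary piece via the perturbation equation itself. The only real difference is the handling of the endpoint $s=t$ in the forward direction: you smoothly continue $e^{(t-s)\Delta}\psi$ past $s=t$ and read off $(u(t),\phi(t))$ directly from the boundary term of the weak formulation, whereas the paper inserts a cutoff $\alpha_\epsilon(s)$ vanishing at $s=t$ and recovers $(u(t),f)$ in the limit $\epsilon\to 0$ using the weak $L^2$-continuity of $u$ (Lemma \ref{weakcont}); both devices are legitimate.
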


\begin{proof}

 Fix $t$ with $0 < t < T , \epsilon \in (0,t),$  and $\alpha_\epsilon \in C_0^\infty((-\infty,T))$ with  $\alpha_\epsilon(s) = 1$ for $s \in [0,t-\epsilon]$  and $\alpha_\epsilon(s) = 0 $ for $s\ge t$.  In addition we take $0  \ge \alpha'_{\epsilon} (s) $ for $s \in (t-\epsilon, t)$.  We define a test function $\phi_\epsilon (s, \cdot) = \mathbb{P}e^{(t-s)\Delta}\alpha_\epsilon( s)  f(\cdot)$ where $f \in \mathcal{S}(\mathbb{R}^m)$.  Notice that  $\phi_\epsilon \in \mathcal{T}$.  We consider 
$$\int_0^t (u(s), \partial_s \phi_\epsilon(s)) ds = \int_{t-\epsilon}^t(u(s),e^{(t-s)\Delta}f)\alpha_\epsilon'(s) ds - \int_0^t (u(s), e^{(t-s)\Delta} \Delta f) \alpha_\epsilon(s) ds.$$ 
From  (\ref{weakeqn}) it follows that
$$ -\int_{t-\epsilon}^t (u(s), e^{(t-s)\Delta} f) \alpha'_\epsilon(s) ds+\int_0^t (u(s)\cdot\nabla u(s), \mathbb{P} e^{(t-s)\Delta}f) \alpha_\epsilon(s) ds = (u_0, e^{t\Delta}f).$$
The first term does not change if we choose $u(s)$ for all $s$ such that $u(s)$ is weakly continuous (see Lemma \ref{weakcont}).  Then since $u(s)$ converges weakly to $u(t)$ as $s \rightarrow t$  and $e^{(t-s) \Delta} f$ converges strongly to $f$ as $s\uparrow t$, $(u(s), e^{(t-s)\Delta} f)$ converges to $(u(t),f)$ as $s\uparrow t$.  Since $\int_{t-\epsilon}^t |\alpha'_\epsilon(s)| ds  = - \int_{t-\epsilon}^t \alpha'_\epsilon(s) ds = 1$, the first term above converges to $(u(t),f)$ as $\epsilon \rightarrow 0$.  In the second term we use the Lebesgue dominated convergence theorem , $|\alpha_\epsilon (s)| \le 1$ in $[0,t]$ and the fact that the integrand is bounded by $c||u(s)\cdot \nabla u(s)||_{m'} ||\mathbb{P} e^{(t-s)\Delta} f||_m \le c' ||\nabla u(s)||_2^2 ||f||_m$ which is independent of $\epsilon$ and integrable.  Thus $\alpha_\epsilon$ can be replaced by $1$ in the limit.  

It follows that for the weakly continuous version of $u$, for every $t > 0$,
$$(u(t),f) + (\int_0^t (\mathbb{P} e^{(t-s)\Delta} u(s) \cdot \nabla u(s),f) = (e^{t\Delta} u_0,f) $$ 
for all $f \in \mathcal{S}(\mathbb{R}^m)$ and thus we have (\ref{perturbation}) for all $t \ge 0$ for the weakly continuous version and for almost every $t \ge 0$ for any version.  

Conversely suppose $u \in L^\infty([0,T];L^2(\mathbb{R}^m)) \cap L^2([0,T];H^1(\mathbb{R}^m)), \nabla_x \cdot u(t,x) = 0$ and (\ref{perturbation}) holds for almost all $t>0$ and for  $t=0$.  Choose a test function $ \phi \in C_0^\infty( (-\infty, T) \times \mathbb{R}^m) $ with $\nabla_x \cdot \phi(t,x) = 0$ and for $0<t<T$ consider 
$$(u(t),\partial_t\phi(t)) = (e^{t\Delta} u_0 - \int_0^t e^{(t-s)\Delta} (u(s)\cdot \nabla u(s)) ds, \partial_t\phi(t)) = $$ 
$$ (u_0, -e^{t\Delta} \Delta \phi(t) + \partial_t (e^{t\Delta} \phi(t)) ) + \int_0^t (u(s) \cdot \nabla u(s), e^{(t-s)\Delta} \Delta \phi(t)) ds - \int_0^t (u(s) \cdot \nabla u(s), \partial_t (e^{(t-s)\Delta} \phi(t)) ds =$$
$$ - (u(t),\Delta \phi(t)) + (u_0, \partial_t (e^{t\Delta} \phi(t)) ) - \int_0^t (u(s) \cdot \nabla u(s) ,\partial_t (e^{(t-s)\Delta} \phi(t))) ds.   $$
Integrating on the interval $[0,t]$ we have 
$$\int_0^t (u(s), \partial_s\phi(s))ds = $$ $$\int_0^t (\nabla u(s),\nabla \phi(s))ds + \int_0^t \partial_s(u_0, e^{s\Delta} \phi(s))ds- \int_0^t \int_0^s (u(\tau) \cdot \nabla u(\tau), \partial_s(e^{(s-\tau) \Delta} \phi(s)))d\tau ds = $$
$$\int_0^t (\nabla u(s),\nabla \phi(s))ds + (u_0, e^{t\Delta} \phi(t))   - \int_0^t (e^{(t-\tau)\Delta} (u(\tau)\cdot \nabla u(\tau)), \phi(t) )d\tau  -(u_0,\phi(0))$$
$$  +\int_0^t (u(\tau) \cdot \nabla u(\tau) ,\phi(\tau)) d\tau = $$

$$ \int_0^t (\nabla u(s),\nabla \phi(s))ds + ( u(t),\phi(t)) - (u_0, \phi(0)) +\int_0^t(u(s) \cdot \nabla u(s), \phi(s)) ds .$$ 
From (\ref{perturbation}), the last equality holds a.e. This is (\ref{weakeqn}).
\end{proof}
$\blacksquare$
\section{Consequences of the perturbation equation: $L^1$ regularity} \label{conseq}

In this section we show that if $u(t)$ is a weak solution to the Navier-Stokes equations, then $v(t) = u(t) -e^{t\Delta}u_0$ is in $L^1(\mathbb{R}^m)$ with some mild smoothness properties in the $x$ variable; more explicitly in some sense almost one distributional derivative in $L^1$.  In addition, in the $L^1$ norm, we prove H\"older continuity in the $t$ variable of degree $\alpha$  for any $\alpha <1/2$.  We consider the consequences of these results for $L^p, 1 < p < 2$.

We will work with the spaces $\mathcal{L}^p_r(\mathbb{R}^m) \subset L^p(\mathbb{R}^m) $ defined as $(1-\Delta)^{-r/2}L^p(\mathbb{R}^m)$ with norm  $||f||_{\mathcal{L}^p_r} = ||(I - \Delta)^{r/2}f||_p$ (see \cite {St} and \cite{Si}) for a discussion of these spaces). The next Proposition shows that $v(t) \in \mathcal{L}^1_r(\mathbb{R}^m)$ for $0 \le r < 1$.  And see an extension in Corollary \ref{Holder}.

\begin{theorem}\label{L1onederivative} 
Suppose $u(t)$ is a weak solution to the Navier-Stokes equations in dimension $m \ge 3$.  Then  $v(t) = u(t) - e^{t\Delta}u_0 \in C([0,\infty); \mathcal{L}^1_r(\mathbb{R}^m))$  for $ 0 \le r < 1$ with norms bounded uniformly for $t$ and $r$ in compact subsets of  $[0,\infty)$ and $[0,1)$ respectively.
\end{theorem}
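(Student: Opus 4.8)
The starting point is the perturbation equation itself: by Theorem \ref{equivalence},
$$v(t) = u(t) - e^{t\Delta}u_0 = -\mathbb{P}\int_0^t e^{(t-s)\Delta}\big(u(s)\cdot\nabla u(s)\big)\,ds.$$
The plan is to exploit the divergence-free condition to put the nonlinearity in divergence form, $u\cdot\nabla u = \nabla\cdot(u\otimes u)$ with $(u\otimes u)_{ij}=u_iu_j$, so that
$$v(t) = -\int_0^t \mathbb{P}\,\nabla\cdot\, e^{(t-s)\Delta}\big(u(s)\otimes u(s)\big)\,ds =: -\int_0^t T_{t-s}\,G(s)\,ds,$$
where $G(s)=u(s)\otimes u(s)$ and $T_\tau := \mathbb{P}\,\nabla\cdot\, e^{\tau\Delta}$; since all factors are Fourier multipliers they commute with one another and with $(I-\Delta)^{r/2}$. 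The crucial gain is that, although $u\cdot\nabla u$ only lies in $L^{m'}$, the tensor $G(s)$ lies in $L^1$ with $\|G(s)\|_1\le \|u(s)\|_2^2\le c_0^2$, where $c_0=\|u\|_{L^\infty([0,T];L^2)}$ (for the weakly continuous version one has $\|u(t)\|_2\le c_0$ for every $t$). Thus the whole problem reduces to an $L^1\to L^1$ mapping estimate for $(I-\Delta)^{r/2}T_\tau$.

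Next I would prove the key kernel bound: for $0\le r<1$ and $0<\tau\le T_0$,
$$\big\|(I-\Delta)^{r/2}\,\mathbb{P}\,\nabla\cdot\, e^{\tau\Delta}\big\|_{L^1\to L^1}\le c(r,T_0)\,\tau^{-(1+r)/2},$$
with $c(r,T_0)$ bounded for $r$ in compact subsets of $[0,1)$. This is a convolution operator; its matrix-valued symbol has entries $(1+|\xi|^2)^{r/2}\,p_{jk}(\xi)\,(i\xi_i)\,e^{-\tau|\xi|^2}$, where $p_{jk}(\xi)=\delta_{jk}-\xi_j\xi_k/|\xi|^2$ is the bounded, degree-zero symbol of $\mathbb{P}$, and the estimate is the statement that the inverse Fourier transform has $L^1$ norm $\le c\,\tau^{-(1+r)/2}$. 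Away from the origin the symbol is smooth and comparable to $|\xi|^{1+r}e^{-\tau|\xi|^2}$, and there the bound follows from the scale-invariant interpolation $\|\check\mu\|_1\le C\,\|\mu\|_2^{1-m/(2N)}\|D^N\mu\|_2^{m/(2N)}$ (any integer $N>m/2$) combined with the parabolic scaling $\xi\mapsto\tau^{-1/2}\xi$, which produces exactly the exponent $-(1+r)/2$; the high-frequency cutoff may be taken $\tau$-independent since the Gaussian supplies all the decay. Near the origin $p_{jk}(\xi)\xi_i$ is homogeneous of degree one (continuous, vanishing at $0$), so the localized piece has an $L^1$ kernel whose norm stays bounded as $\tau\to0$ and is absorbed into the stated bound. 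The decisive point is the exponent $(1+r)/2$: one factor $\tau^{-1/2}$ comes from the single derivative $\nabla\cdot$ and $\tau^{-r/2}$ from $(I-\Delta)^{r/2}$, and $(1+r)/2<1$ precisely when $r<1$, which renders the ensuing time integral convergent.

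Granting the kernel bound, the conclusion assembles quickly (the Bochner integral defining $w(t)=(I-\Delta)^{r/2}v(t)$ being legitimate since the integrand has $L^1$-integrable norm). For the size estimate,
$$\|v(t)\|_{\mathcal{L}^1_r}=\big\|(I-\Delta)^{r/2}v(t)\big\|_1\le \int_0^t \big\|(I-\Delta)^{r/2}T_{t-s}\big\|_{L^1\to L^1}\,\|G(s)\|_1\,ds\le c\,c_0^2\int_0^t (t-s)^{-(1+r)/2}\,ds,$$
and the last integral equals $\tfrac{2}{1-r}t^{(1-r)/2}$, bounded uniformly for $t$ and $r$ in compact subsets of $[0,\infty)$ and $[0,1)$. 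For continuity at a time $t_0>0$ I would write, for $h>0$,
$$w(t_0+h)-w(t_0)=-\int_{t_0}^{t_0+h}T^{(r)}_{t_0+h-s}G(s)\,ds-\int_0^{t_0}\big(T^{(r)}_{t_0+h-s}-T^{(r)}_{t_0-s}\big)G(s)\,ds,$$
with $T^{(r)}_\tau=(I-\Delta)^{r/2}T_\tau$. The first integral is $O(h^{(1-r)/2})$ by the kernel bound. In the second one splits $\int_0^{t_0}=\int_{t_0-\delta}^{t_0}+\int_0^{t_0-\delta}$: the part near $t_0$ is $O(\delta^{(1-r)/2})$ again by the kernel bound, while on $[0,t_0-\delta]$ the arguments stay $\ge\delta$, so strong continuity of the heat semigroup on $L^1$ (writing $T^{(r)}_\tau=T^{(r)}_{\delta/2}\,e^{(\tau-\delta/2)\Delta}$ with $T^{(r)}_{\delta/2}$ a fixed bounded operator) gives convergence to $0$ pointwise in $s$, dominated by the constant $2c\,\delta^{-(1+r)/2}c_0^2$; dominated convergence finishes it. The case $t_0=0$ is immediate since $v(0)=0$ and $\|w(h)\|_1\le c\,c_0^2 h^{(1-r)/2}\to0$.

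I expect the main obstacle to be the kernel estimate of the second paragraph, specifically the low-frequency behaviour of the Leray symbol: because $p_{jk}(\xi)$ is only bounded (not smooth) at $\xi=0$, the naive route of placing the symbol in a Sobolev space $H^s$ with $s>m/2$ fails for $m\ge3$, since $p_{jk}(\xi)\xi_i$ is only Lipschitz at the origin and hence in $H^s$ only for $s<3/2\le m/2$. Separating the origin with a fixed cutoff and treating the two regimes by different means — scaling at high frequency, homogeneity at low frequency — is the part requiring care; the time-integration, the uniform bounds, and the continuity are then routine.
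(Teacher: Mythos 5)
Your overall strategy is sound and arrives at exactly the estimates the paper needs, but by a genuinely different route. The paper never works on the symbol side: it writes $\mathbb{P}=I-(I-\mathbb{P})$, computes the $x$-space kernel of $(I-\mathbb{P})_{ij}(I-\Delta)^{r/2}K_s$ explicitly in terms of a radial profile $g_{r,s}$ whose decay $|g^{(n)}_{r,s}(w)|\le C_n(1+|w|)^{-(m+n-1)}$ is established in the Appendix, and then integrates by parts once against $u_ku_a$ --- your divergence-form trick $u\cdot\nabla u=\nabla\cdot(u\otimes u)$ appears there as $f_a=\sum_k\partial_{z_k}(u_ku_a)$ --- to produce the same integrand $s^{-(1+r)/2}||u(t-s)||_2^2$ that you obtain, the identity part being recovered from $\sum_j\partial_j\partial_j\Delta^{-1}=I$. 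Your packaging of all of this into a single operator bound $||(I-\Delta)^{r/2}\mathbb{P}\,\nabla\cdot\, e^{\tau\Delta}||_{L^1\to L^1}\le c\,\tau^{-(1+r)/2}$ is cleaner and more reusable, and your continuity argument (splitting off $[t_0-\delta,t_0]$ and using dominated convergence together with strong continuity of $e^{t\Delta}$ on $L^1$) avoids the paper's auxiliary lemma $||(I-e^{h\Delta})(I-\Delta)^{-\epsilon}||_{L^1\to L^1}\le Ch^{\epsilon}$ --- though that lemma is what later yields the H\"older \emph{rate} in Corollary \ref{Holder}, which your qualitative continuity argument does not by itself recover.

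The one step you must repair is the high-frequency half of your kernel bound, and note that your own diagnosis points at the wrong half. A \emph{single} application of $||\check\mu||_1\le C||\mu||_2^{1-m/(2N)}||D^N\mu||_2^{m/(2N)}$ after one global parabolic rescaling does not produce $\tau^{-(1+r)/2}$: after the rescaling $\xi\mapsto\tau^{-1/2}\eta$ the high-frequency piece is supported in $|\eta|\ge\tau^{1/2}$, and there the $N$-th derivatives of $p_{jk}(\eta)\eta_i$ grow like $|\eta|^{1-N}$, so $||D^N\mu||_2^2\sim\tau^{m/2-N}$ blows up for $N>m/2$ and the interpolation loses a positive power of $\tau$. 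The standard repair is a dyadic decomposition over annuli $|\xi|\sim 2^k$, $k\ge0$: on each annulus the symbol is, after rescaling by $2^k$, smooth with derivatives bounded by $C_N 2^{k(1+r)}e^{-c\tau 4^k}$, whence each piece contributes $L^1$ kernel norm $\lesssim 2^{k(1+r)}e^{-c\tau 4^k}$ and $\sum_{k\ge0}2^{k(1+r)}e^{-c\tau4^k}\lesssim\tau^{-(1+r)/2}$. By contrast, the low-frequency piece you worry about is actually unproblematic even by the ``naive'' Sobolev route: $\chi(\xi)p_{jk}(\xi)\xi_i$ is homogeneous of degree one near the origin, so its order-$N$ derivatives are locally square-integrable for all $N<1+m/2$, i.e.\ it lies in $H^s$ for all $s<1+m/2$, which leaves room above $m/2$; your Lipschitz-based cap at $s<3/2$ is not correct in dimension $m\ge3$. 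With the dyadic fix inserted, your proof is complete and uniform in $r$ and $t$ on the stated compact sets.
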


\begin{proof}
We have $(I-\Delta)^{r/2}v(t) = -\mathbb{P} \int_0^t(I -\Delta)^{r/2} e^{(t-s)\Delta} u(s) \cdot \nabla u(s) ds$.  The fact that $\mathbb{P}$ is not a bounded operator on $L^1$ complicates the proof.  Since $\mathbb{P} = I - (I - \mathbb{P})$ we begin by estimating  $(I -\mathbb{P}) (I - \Delta)^{r/2} K_t(x)$ where $K_t$ is the integral kernel for the operator $e^{t\Delta}$ in $\mathbb{R}^m$, $K_t(x-y) = (4\pi t)^{-m/2} e^{-|x-y|^2/4t}$.  We have $\mathcal{F}((I - \mathbb{P}) f)_i(\xi)  = \sum_j  \xi_i\xi_j |\xi|^{-2} \mathcal{F} f_j(\xi)$ where $\mathcal{F}$ is the Fourier transform. Thus working in $x$ - space with $y=x/\sqrt t$ we have

$$\partial_{x_i}\partial_{x_j} (I - \Delta)^{r/2}\Delta^{-1} K_t(x) = -(2\pi)^{-m} t^{-(m+r)/2} \partial_{y_i}\partial_{y_j} \int (t + |\eta|^2)^{r/2} |\eta|^{-2} e^{-|\eta|^2} e^{iy\cdot \eta} d\eta.$$

A bit more calculation gives
\be \label{1-p} 
 (I-\mathbb{P})_{i,j}  (I - \Delta)^{r/2}K_t(x) = ct^{-(m+r)/2} (\omega_i\omega_j g'_{r,t}(|y|) + |y|^{-1}(\delta_{i,j} - \omega_i\omega_j)g_{r,t}(|y|)),
 \ee
 where $\omega_i = y_i/|y|$ and

 $$g_{r,t}(w) =  \int_0^\infty \int_{-1}^1 (t+ s^2)^{r/2} s^{m-2} e^{-s^2} e^{isw\lambda} \lambda (1-\lambda^2)^{(m-3)/2} d\lambda ds.$$
 
 Here $c$ is an $m$ dependent constant.
 
 It is shown in the Appendix that the $C^\infty$ function $g_{r,t}$ and its derivatives obey the following estimates 
 
 \be \label{grtestimates} 
 |g^{(n)}_{r,t}(w)|  \le C_n(1+|w|)^{-(m+n-1)},
 \ee
for all $r \ge 0$, uniformly for $t$ in compact sets of $[0,\infty)$.

We need to estimate the following $L^1$ norm:
$$\int|\int_0^t\int \partial_{x_i}\partial_{x_j} (I-\Delta)^{r/2}\Delta^{-1} K_s(x-z) f_j(t-s,z) dz ds|dx$$

where $f_j(s,z) = u(s,z)\cdot \nabla u_j(s,z)$.  For a reason that will be apparent only at the end of the proof we decouple the $j$ indices and replace $f_j$ by $f_a$ where now $i,j,a$ can be all different.

We first look at the term involving the derivative $g'_{r,t}$ in (\ref{1-p}):

$$\int | \int_0^t s^{-(m+r)/2}\int  \omega_i \omega_j g'_{r,s}(|y|)f_a(t-s,z)dz ds|dx$$
where $y = (x-z)/\sqrt{s}$ and $\omega_i = y_i/|y|$.  We write

$$f_a(s,z) = \sum_k (\partial/\partial z_k) (u_k(s,z) u_a(s,z))$$

and integrate by parts in the $z$ integral.  We use $(\partial /\partial z_k) |y| = O(s^{-1/2}), (\partial/\partial z_k) \omega_i = O(|y|^{-1} s^{-1/2})$ to get 

$$ |(\partial/\partial z_k)(\omega_i \omega_j g'_{r,t}(|y|)| \le c s^{-1/2} |y|^{-1}(1 + |y|)^{-m} $$
which gives
\be \label{lastL1bound} 
\begin{split} 
\int_0^t \int s^{-(m+r)/2}(\int |(\partial /\partial z_k)  [\omega_i \omega_j g'_{r,s}(|y|)]|dx) |u_k(t-s,z) u_a(t-s,z)|   dz ds\le \\
\int_0^t s^{-(1+r)/2} ||u(t-s)||_2^2 ds \le c(1-r)^{-1} ||u||_{L^\infty([0,T], L^2))}^2.
\end{split} 
\ee
For the term involving $|y|^{-1}g_{r,s}$  in (\ref{1-p}) we again integrate by parts and use 

$$  | (\partial /\partial z_k)( |y|^{-1} g_{r,s}(|y|)| \le c s^{-1/2} (|y|^{-2} |g_{r,s}(|y|)| + |y|^{-1}|g'_{r,s}(|y|)|) \le C s^{-1/2} |y|^{-2}(1+|y|)^{-m+1} .$$

This term is then estimated in the same way as the previous term involving $g'_{r,t}$ which results in 

$$||(I - \mathbb{P})  \int_0^t(I -\Delta)^{r/2} e^{(t-s)\Delta} u(s) \cdot \nabla u(s) ds||_{L^1} \le C_{r,t} $$

where $C_{r,t}$ is bounded uniformly for $r$ and $t$ in compacts of $[0,1)$ and $[0,\infty)$ respectively.  

To show that 

$$||\mathbb{P}  \int_0^t(I -\Delta)^{r/2} e^{(t-s)\Delta} u(s) \cdot \nabla u(s) ds||_{L^1} $$

is bounded we note $\mathbb{P} = I - (I-\mathbb{P})$ so we just need to deal with the identity term.  But $\sum_j \partial_{x_j} \partial_{x_j} \Delta^{-1} = I$ so that the bound proved for 
$$(I-\mathbb{P})_{i,j} \int_0^t(I -\Delta)^{r/2} e^{(t-s)\Delta} u(s) \cdot \nabla u_a(s) ds $$ immediately gives the desired estimate.  

It remains to prove the continuity in $t$.  With $\delta > 0$ we have 

$$|| \mathbb{P}  \int_0^{t+\delta}(I -\Delta)^{r/2} e^{(t+ \delta-s)\Delta} u(s) \cdot \nabla u(s) ds - \mathbb{P}  \int_0^t(I -\Delta)^{r/2} e^{(t-s)\Delta} u(s) \cdot \nabla u(s) ds||_1 \le $$

$$  || \mathbb{P}  \int_0^t (I -\Delta)^{r/2} (e^{(t+ \delta-s)\Delta} - e^{(t-s)\Delta} ) u(s) \cdot \nabla u(s) ds||_1 + ||\mathbb{P}\int_t^{t+\delta}  (I -\Delta)^{r/2}e^{(t+ \delta-s)\Delta} u(s) \cdot \nabla u(s) ds||_1.  $$

The first term is just 
$$||(e^{\delta\Delta} - I)(I - \Delta)^{r/2} v(t)||_1$$ 
which tends to zero as $\delta \downarrow 0$ since $\{e^{t\Delta} \}$ is a $C_0 $ semi-group on $L^1$.  For the second term we refer to (\ref{lastL1bound}).  The estimate we need is the last term where $t$ is replaced by $t+\delta$ and $0$ is replaced by $t$.  We obtain 
$$ ||\mathbb{P}\int_t^{t+\delta}  (I -\Delta)^{r/2}e^{(t+ \delta-s)\Delta} u(s) \cdot \nabla u(s) ds||_1 \le  c\int_t^{t+\delta}(t+\delta -s)^{-(1 +r)/2} ds  ||u||_{L^\infty([0,T], L^2))}^2 = $$ 
$$ = 2c(1-r)^{-1} \delta^{(1-r)/2} ||u||_{L^\infty([0,T], L^2))}^2   $$
where $T$ can be taken any number larger than say $t+1$ (where $\delta <1$).  

When we consider $v(t-\delta) - v(t)$ with $\delta > 0$ the proof needs the following lemma (which we will also make use of in the corollary which follows):
\bl
Suppose $\epsilon \in (0,1]$.  Then 
$$ ||(I - e^{h\Delta})(I-\Delta)^{-\epsilon} ||_{L^1 \rightarrow L^1} \le C h^\epsilon$$
\el
\bpf
We write 
$$(I - e^{h\Delta})(I - \Delta)^{-\epsilon} = \Gamma(\epsilon)^{-1} (I - e^{h\Delta}) \int_0^\infty t^{\epsilon - 1} e^{-t} e^{t\Delta} dt = $$

$$\Gamma(\epsilon)^{-1}\big( \int_0^\infty t^{\epsilon - 1} e^{-t} e^{t\Delta} dt -  \int_0^\infty t^{\epsilon - 1} e^{-t} e^{(t+h)\Delta} dt\big)=$$

$$\Gamma(\epsilon)^{-1}\big( \int_h^\infty (t^{\epsilon - 1} e^{-t} - (t-h)^{\epsilon - 1} e^{-(t-h)}) e^{t\Delta} dt + \Gamma(\epsilon)^{-1} \int_0^h t^{\epsilon - 1} e^{-t}e^{t\Delta} dt .$$

Since$||e^{t\Delta}||_{L^1 \rightarrow L^1} = 1$,

$$||(I - e^{h\Delta})(I-\Delta)^{-\epsilon} ||_{L^1 \rightarrow L^1} \le \Gamma(\epsilon)^{-1}\big( \int_0^h  t^{\epsilon - 1} e^{-t}dt + \int_h^\infty ((t-h)^{\epsilon - 1} e^{-(t-h)} -t^{\epsilon - 1} e^{-t})dt\big). $$

Here we have used the positivity of the last integrand. Thus we have
$$||(I - e^{h\Delta})(I-\Delta)^{-\epsilon} ||_{L^1 \rightarrow L^1} \le \Gamma(\epsilon)^{-1} 2\int_0^h t^{\epsilon -1} e^{-t} dt \le 2 h^\epsilon (\epsilon \Gamma(\epsilon))^{-1} = 2h^\epsilon \Gamma(1+\epsilon)^{-1}.$$
\epf
$\blacksquare$ \\
$\mathbf{Proof}$  (continued)

$$|| \mathbb{P}  \int_0^{t-\delta}(I -\Delta)^{r/2} e^{(t- \delta-s)\Delta} u(s) \cdot \nabla u(s) ds - \mathbb{P}  \int_0^t(I -\Delta)^{r/2} e^{(t-s)\Delta} u(s) \cdot \nabla u(s) ds||_1 \le $$

$$  || \mathbb{P}  \int_0^{t-\delta}  (I -\Delta)^{r/2} (e^{(t-s)\Delta} - e^{(t-\delta-s)\Delta} ) u(s) \cdot \nabla u(s) ds||_1 + ||\mathbb{P}\int_{t-\delta} ^t  (I -\Delta)^{r/2}e^{(t-s)\Delta} u(s) \cdot \nabla u(s) ds||_1.  $$

The first term above is 
$$|(|I - e^{\delta\Delta}) (I - \Delta)^{-\epsilon/2} \mathbb{P} \int_0^{t-\delta} (I - \Delta)^{(r+\epsilon)/2} e^{(t-\delta-s)\Delta}u(s)\cdot \nabla u(s) ds||_1.$$

Since $r<1$ we can choose $\epsilon > 0 $ so that $r+ \epsilon <1$.  Then using the lemma and the boundedness of the $L^1$ norm of 

$$\mathbb{P} \int_0^{t-\delta} (I - \Delta)^{(r+\epsilon)/2} e^{(t-\delta-s)\Delta}u(s)\cdot \nabla u(s) ds$$ 
which we have shown above for $r + \epsilon < 1$ (uniformly for $0\le t - \delta \le T$ , $T$ fixed but arbitrary)  we have 
 
$$  || \mathbb{P}  \int_0^{t-\delta}  (I -\Delta)^{r/2} (e^{(t-s)\Delta} - e^{(t-\delta-s)\Delta} ) u(s) \cdot \nabla u(s) ds||_1 \le C\delta^{\epsilon/2}.$$

The remaining term is handled the same way as with $t + \delta $ so that we have 
$$ ||\mathbb{P}\int_{t-\delta} ^t  (I -\Delta)^{r/2}e^{(t-s)\Delta} u(s) \cdot \nabla u(s) ds||_1 \le C \int_{t-\delta}^t s^{- (r+1)/2} ds ||u||_{L^\infty([0,T], L^2))}^2   \le $$
 $$2(1-r)^{-1} \delta^{(1-r)/2} ||u||_{L^\infty([0,T], L^2))}^2  .$$

\end{proof} 
$\blacksquare$

The following corollary is actually a corollary of the \emph{proof} of Theorem \ref{L1onederivative}:

\bc \label{Holder}
$v(\cdot) $ is H{\"o}lder continuous in the norm of $\mathcal{L}^1_r(\mathbb{R}^m)$ for $0\le r < 1$.
\be
||v(t+h) - v(t)||_{\mathcal{L}^1_r}\le c_\alpha |h|^\alpha
\ee
if $0< \alpha < (1-r)/2$, $t, t+h \ge 0, |h| \le 1.$  $c_\alpha$ is bounded uniformly in $t$ for $t$ in any compact interval of $[0,\infty)$. 
\ec
\bpf
See the last two inequalities above which apply in the cases $h = \delta$ and $h =-\delta, \delta >0$.
\epf

\br:
If for $r>0$ we define $h_{r,t}$ by the equation
$$(I -\Delta)^{r/2}K_t (x) = (2\pi)^{-m} t^{-(m+r)/2} \int (t+|\eta|^2)^{r/2} e^{-|\eta|^2} e^{iy\cdot \eta} dy = $$
$$Ct^{-(m+r)/2} h_{r,t}(|y|),$$
with $C$ defined so that
$$ h_{r,t}(w): = \int_0^\infty \int_{-1}^1 e^{isw\lambda}(t+ s^2)^{r/2} s^{m-1} e^{-s^2} (1-\lambda^2)^{(m-3)/2} d\lambda ds, $$
it can be shown that 
$$| h_{r,t}(w)| \le C(1 + |w|)^{-(m +\epsilon)}$$
for $\epsilon \in [0, r)$. Here $C$ is uniformly bounded for $t$ in compacts of $[0,\infty)$.  Thus for $r>0$ the integration by parts in the above proof is not necessary to show $||(I-\Delta)^{r/2}\int_0^t e^{(t-s)\Delta} u(s)\cdot \nabla u(s) ds||_1< \infty$. 
 In fact we have 
 $$||(I-\Delta)^{r/2}K_t||_1 \le c t^{-r/2} \int|h_{r,t}(|y|)|dy \le c't^{-r/2}.$$
 Thus 
 $$||(I-\Delta)^{r/2}\int_0^t e^{(t-s)\Delta} u(s)\cdot \nabla u(s) ds||_1 \le c'\int_0^t (t-s)^{-r/2}||u(s)||_2||\nabla u(s)||_2ds \le $$
 $$ c' ||u||_{L^\infty([0,T]; L^2)} (T^{1-r}/(1-r))^{1/2}( \int_0^T ||\nabla u(s)||^2 ds)^{1/2}$$
 where $T \ge t$.
\er

The following proposition is a simple consequence of Theorem \ref{L1onederivative}, Corollary \ref{Holder}, and interpolation:

\bp
$v(t) \in \mathcal{L}^p_s(\mathbb{R}^m)$ and  $v(\cdot)$ is  H{\"o}lder continuous in the $\mathcal{L}^p_s(\mathbb{R}^m)$ norm whenever $1\le p < m/(m-1 +s)$.  Explicitly
given $s\ge 0$  and $p$ so that  $1\le p < m/(m-1 +s)$ then
$$||v(t+h) - v(t)||_{\mathcal{L}^p_s} \le C_\epsilon  |h|^\epsilon$$
for $0 < \epsilon < (m/2)(p^{-1} - p_0^{-1}), p_0 = m/(m- 1 + s)$.  If $1\le p <2$, $v(t) \in L^p$ and $v(\cdot)$ is H{\"o}lder continuous in the $L^p$ norm:
$$||v(t+h) - v(t)||_p  \le C'_\epsilon |h|^\epsilon$$
for $0 < \epsilon< p^{-1} -2^{-1} $.  Note that if $s=0$,  $p^{-1} -2^{-1}  - (m/2)(p^{-1} - p_0^{-1}) = \big ((m/2) - 1\big)p'^{-1} \ge 0.$ 
\ep

\bpf
From the representation $(1-\Delta)^{-w/2}f(x)= \int k_w(x-y) f(y)dy$ with 
$$k_w(x) = \Gamma(w/2)^{-1} \int_0^\infty e^{-t} t^{(w-2)/2} K_t(x) dt$$ for $w>0$,
it is easy to derive the bound $k_w(x)\le C_we^{-|x|/2} |x|^{-(m-w)}$ for $0<w <1$.  
Given $p$ and $s \ge 0$ with $1\le p < p_0 $,  we choose $0 < 1- r  < m(p^{-1} - p_0^{-1}) $ or what is the same $p < m/(m-r+ s))$.  We have
$$ ||(I- \Delta)^{s/2}( v(t+h) - v(t))||_p = ||(I - \Delta)^{-(r-s)/2}(I-\Delta)^{r/2} (v(t+h) - v(t))||_p.$$
Using Minkowski's inequality we have 
$$ ||(I- \Delta)^{s/2}( v(t+h) - v(t))||_p \le ||k_{r-s}||_p ||(I-\Delta)^{r/2} (v(t+h) - v(t))||_1. $$
Clearly $||k_{r-s}||_p < \infty $ if $p(m-(r-s)) < m$ which combined with Corollary \ref{Holder} gives the first result.  The fact that $v(t) \in \mathcal{L}^p_s$ follows from a similar interpolation. \\
To prove the second inequality, for  $1\le p < 2$ we interpolate to obtain with  $\theta = 2(p^{-1} - 2^{-1})$
$$||v(t+h) - v(t)||_p \le ||v(t+h) - v(t)||_1^\theta ||v(t+ h) - v(t)||_2^{1-\theta}\le ||v(t+h) - v(t)||_1^\theta (3||u||_{L^\infty ([0,T];L^2)})^{1-\theta}$$ 
for $T$ large enough. The desired H{\"o}lder continuity follows from Corollary \ref{Holder}.  Basically the same interpolation gives $v(t) \in L^p$.
\epf
$\blacksquare$

The somewhat abstract condition of belonging to the space $\mathcal{L}_p^r(\mathbb{R}^m)$ has a more down to earth consequence.  We prove the following H\"{o}lder condition:  For $|h| \le 1$
\begin{proposition}\label{spaceHolder}
\begin{equation}
||v(t)(\cdot + h) - v(t)(\cdot)||_p  \le c(r) |h|^r ||v(t)||_{ \mathcal{L}_p^r}.
\end{equation}
where $1\le p < m/(m-1 +r)$ and $0 \le r < 1$.  
\end{proposition}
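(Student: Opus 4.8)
The plan is to reduce the statement to a single convolution-kernel estimate and then invoke Young's inequality, in the same spirit as the preceding proof. Write $w = (I-\Delta)^{r/2}v(t)$, so that $w\in L^p$ with $||w||_p = ||v(t)||_{\mathcal{L}^p_r}$ and $v(t) = (I-\Delta)^{-r/2}w = k_r*w$, where $k_r$ is the Bessel kernel already introduced in the proof of the previous Proposition. Since translation by $h$ commutes with the convolution operator $(I-\Delta)^{-r/2}$, we get $v(t)(\cdot+h)-v(t)(\cdot) = (\tau_h k_r - k_r)*w$, where $\tau_h k_r(x)=k_r(x+h)$, and Young's convolution inequality gives
\[
||v(t)(\cdot+h)-v(t)(\cdot)||_p \le ||\tau_h k_r - k_r||_1\,||w||_p .
\]
Hence the whole proposition follows once I establish
\[
||\tau_h k_r - k_r||_1 \le c(r)|h|^r, \qquad 0<r<1 .
\]
(The case $r=0$ is trivial, since then $|h|^r=1$ and the left side is at most $2||v(t)||_p$.) This reduction in fact produces the bound for every $p\in[1,\infty]$; the stated range $1\le p< m/(m-1+r)$ is exactly the range in which the previous Proposition guarantees $v(t)\in\mathcal{L}^p_r$, so that the right-hand side is finite.

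Next I would record the two pointwise facts about the kernel that the argument needs. From the subordination formula $k_r(x) = \Gamma(r/2)^{-1}\int_0^\infty e^{-t}t^{(r-2)/2}K_t(x)\,dt$ I already have $k_r(x)\le C_r e^{-|x|/2}|x|^{-(m-r)}$ for $0<r<1$; differentiating under the integral sign and using $\nabla_x K_t(x) = -(2t)^{-1}xK_t(x)$ yields in the same way the gradient bound $|\nabla k_r(x)|\le C_r' e^{-|x|/2}|x|^{-(m-r+1)}$.

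I would then estimate the $L^1$ norm by splitting $\mathbb{R}^m$ at radius $2|h|$. On the near region $|x|\le 2|h|$, I bound the difference by $|k_r(x+h)|+|k_r(x)|$ and use the size bound; since $r>0$,
\[
\int_{|x|\le 2|h|}|x|^{-(m-r)}\,dx = c_m\int_0^{2|h|}\rho^{r-1}\,d\rho \le c\,|h|^r ,
\]
and the translated term is handled by the change of variables $x\mapsto x-h$. On the far region $|x|>2|h|$, the segment joining $x$ and $x+h$ stays in $\{|\xi|\ge |x|/2\}$, so the mean value theorem with the gradient bound gives $|k_r(x+h)-k_r(x)|\le C|h|\,|x|^{-(m-r+1)}$, and because $r<1$,
\[
\int_{|x|>2|h|}|h|\,|x|^{-(m-r+1)}\,dx = c_m|h|\int_{2|h|}^\infty\rho^{r-2}\,d\rho \le c\,|h|^r .
\]
Adding the two contributions yields the kernel estimate and completes the proof.

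The main obstacle is the far-region estimate. It is here that the two competing constraints appear: $r>0$ is needed for convergence of $\int_0^{2|h|}\rho^{r-1}\,d\rho$ at the origin, while $r<1$ is needed for convergence of $\int_{2|h|}^\infty\rho^{r-2}\,d\rho$, whose exponent $r-2<-1$ controls the lower endpoint (the exponential factor renders the tail harmless but is not what produces the power $|h|^r$). Choosing the splitting radius proportional to $|h|$ is what collapses both integrals onto the single scale $|h|^r$, and verifying that the segment $[x,x+h]$ stays away from the origin on the far region is the one point where care is required so that the singular gradient bound may legitimately be applied.
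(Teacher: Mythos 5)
Your proposal is correct, and it reaches the kernel estimate $\|\tau_h k_r - k_r\|_1 \le c(r)|h|^r$ by a genuinely different route than the paper. Both arguments make the same initial reduction (translation commutes with $(I-\Delta)^{-r/2}$, so Young/Minkowski bounds the operator norm of $(U(h)-I)(I-\Delta)^{-r/2}$ on every $L^p$ by an $L^1$ norm of a difference of kernels, with the restriction on $p$ entering only to guarantee $v(t)\in\mathcal{L}^p_r$). The paper, however, never assembles the Bessel kernel: it applies Minkowski's integral inequality directly to the subordination formula, computes $\int |K_t(x+h)-K_t(x)|\,dx$ essentially exactly for each fixed $t$ by reducing to a one-dimensional Gaussian integral (obtaining the bound $c\,w(1+w)^{-1}$ with $w=|h|/(2\sqrt t)$), and then evaluates the remaining $t$-integral $\int_0^\infty e^{-t}t^{r/2-1}\,w(1+w)^{-1}\,dt$; the splitting between the two regimes happens in the time variable at $t\sim|h|^2$. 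You instead work with the assembled kernel $k_r$, prove pointwise size and gradient bounds, and split space at $|x|\sim|h|$. Your version is the classical, more elementary argument and makes visible exactly where $r>0$ (local integrability of $\rho^{r-1}$) and $r<1$ (integrability of $\rho^{r-2}$ at infinity) are used; it does require the extra gradient bound on $k_r$, which you justify adequately by differentiating under the subordination integral. The paper's version dispenses with any derivative estimate on $k_r$ and delivers the endpoint behaviors $|h|\log|h|^{-1}$ at $r=1$ and $|h|$ for $r>1$ as free by-products of the explicit $t$-integral, which your spatial splitting would need a separate argument to recover. Both are complete proofs of the stated proposition.
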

\bpf
This is a result which has nothing to do with the Navier-Stokes equation and is probably well-known.  We give a simple proof a result which holds for $1\le p < \infty$ and $0<r$.
Define $(U(h)w)(x) = w(x+h)$.  We will estimate the norm of $(U(h) - I)(-\Delta + I)^{-r/2}$ as a self - map of $L^p(\mathbb{R}^m)$. We consider the integral kernel of this operator
$$(U(h) -I)(-\Delta + I)^{-r/2}(x,y)  = \Gamma(r/2)^{-1} \int_0^\infty e^{-t} t^{r/2}(K_t(x+h-y) - K_t(x-y)) dt/t.$$  We will use Minkowski's inequality in integral form so we calculate with $y = \frac{x}{2\sqrt t} $ and $h = (\lambda,0,....0)$ with $\lambda \ge 0$ 

$$\int |K_t(x+h) - K_t(x)|dx = (4\pi t)^{-m/2} \int | e^{-|x+h|^2/4t} - e^{-|x|^2/4t} |dx = \pi^{-m/2}\int e^{-|y +\frac {h}{2\sqrt t}|^2} - e^{-|y|^2}| dy = $$ $$\pi^{-1/2} \int_{-\infty}^\infty |e^{-(s+w/2)^2} - e^{-(s-w/2)^2}|ds.$$
where $w = \frac{\lambda}{2\sqrt t}$.  Thus

$$\int |K_t(x+h) - K_t(x)|dx  = 2\pi^{-1/2}\int_0^\infty \int _0^1 (d/dt) [e^{-(s-tw/2)^2} - e^{-(s+tw/2)^2}] dtds = $$ $$w\pi^{-1/2}\int _0^1 \int_0^\infty [(s-tw/2)e^{-(s-tw/2)^2} + (s+tw/2) e^{-(s+tw/2)^2}]ds dt.$$
We have $\int_0^\infty (s-tw/2)  e^{-(s-tw/2)^2}ds = \int_{-tw/2} ^\infty ue^{-u^2}du \le \int_0^\infty u e^{-u^2} du =1/2$, while similarly $\int_0^\infty (s+tw/2) e^{-(s+tw/2)^2} ds \le 1/2$.   Since we also have $\int |K_t(x+h) - K_t(x)|dx \le 2$ we obtain

$$\int |K_t(x+h) - K_t(x)|dx  \le c w(1+w)^{-1}$$.

Finally 

$$||(U(h) -I)(-\Delta + I)^{-r/2}||_{L^p \rightarrow L^p} \le c\int_0^\infty e^{-t} t^{r/2} \frac{\frac{\lambda}{2 \sqrt t}}{ 1 + \frac{\lambda}{2 \sqrt t}} \frac {dt}{t}$$

This integral is easy to estimate.  We find

$$||(U(h) -I)(-\Delta + I)^{-r/2}||_{L^p \rightarrow L^p} \le c(r) |h|^r; r < 1$$ 
$$\le c |h| \log |h|^{-1} ; r = 1$$
$$\le c(r) |h| ; r> 1$$

\epf

\section{Acknowledgement}
Thanks go to my colleague Zoran Grujic for pointing out that a version of Theorem \ref{equivalence} is proved in \cite {LR} and in \cite{FLRT}.

 \section{Appendix}
 
 In this appendix we derive bounds on $(I -\mathbb{P})_{i,j} (I - \Delta)^{r/2} K_t(x)$ where $K_t$ is the integral kernel for the operator $e^{t\Delta}$ in $\mathbb{R}^m$.  Explicitly, $K_t(x-y) = (4\pi t)^{-m/2} e^{-|x-y|^2/4t}$.   We have 
 
 $$(I-\mathbb{P})_{i,j}  (I - \Delta)^{r/2} K_t(x) = ct^{-(m+r)/2} (\omega_i\omega_j g'_{r,t}(|y|) + |y|^{-1}(\delta_{i,j} - \omega_i\omega_j)g_{r,t}(|y|)),$$ 
 
 where $y= x/\sqrt t$, $\omega_i = y_i/|y|$ and

 \be \label{grtdef}
 g_{r,t}(w) =  \int_0^\infty \int_{-1}^1 (t+ s^2)^{r/2} s^{m-2} e^{-s^2} e^{isw\lambda} \lambda (1-\lambda^2)^{(m-3)/2} d\lambda ds.
 \ee
 Here $c$ is an $m$ dependent constant. 
 
 \begin{proposition}
 $g_{r,t} \in C^\infty(\mathbb{R})$ and 
 \begin{equation}
  |g_{r,t}^{(n)}(w)|  \le C_n(1 +|w|)^{-(m+n-1)}
 \end{equation}
 for any $r \ge 0$ uniformly for $t \in [0,T]$, for any $T>0$.
 \end{proposition}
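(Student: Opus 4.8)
The plan is to strip the double integral down to a one–dimensional oscillatory integral, pull out the power $|w|^{-(m+n-1)}$ by a scaling substitution, and then bound a tail that is only conditionally (in fact only Gaussian-regularized) convergent by exploiting oscillation. First I would differentiate under the integral sign: since $\partial_w^n e^{isw\lambda}=(is\lambda)^n e^{isw\lambda}$ and $|s\lambda|^n\le s^n$, the $n$-th formal derivative is dominated by $(t+s^2)^{r/2}s^{m+n-2}e^{-s^2}\,|\lambda|^{n+1}(1-\lambda^2)^{(m-3)/2}$, which is integrable on $(0,\infty)\times(-1,1)$ for every $m\ge3$ (the $\lambda$-weight is integrable because $(m-3)/2>-1$, and the Gaussian swallows the polynomial growth in $s$). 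This proves $g_{r,t}\in C^\infty$ and gives
$$g_{r,t}^{(n)}(w)=i^n\int_0^\infty A_n(s)\,\Phi_n(sw)\,ds,\qquad A_n(s)=(t+s^2)^{r/2}s^{m+n-2}e^{-s^2},$$
$$\Phi_n(u)=\int_{-1}^1 e^{iu\lambda}\,\lambda^{n+1}(1-\lambda^2)^{(m-3)/2}\,d\lambda.$$
The same domination yields the crude bound $|g_{r,t}^{(n)}(w)|\le C_n$, which already settles the estimate on $|w|\le 1$; since $|g_{r,t}^{(n)}(-w)|=|g_{r,t}^{(n)}(w)|$, it remains to treat $w\ge 1$.

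Next I would record the large–$u$ behaviour of the amplitude $\Phi_n$. Repeatedly integrating by parts in $\lambda$ (using that $(1-\lambda^2)^{(m-3)/2}$ vanishes at the endpoints $\lambda=\pm1$) produces, for any $N$, a finite endpoint expansion $\Phi_n(u)=e^{iu}a_+(u)+e^{-iu}a_-(u)+R_N(u)$, where $a_\pm(u)$ are finite sums of terms $c_k u^{-k}$ with $k$ running through $(m-1)/2,(m+1)/2,\dots$ (half–integers when $m$ is even, coming from the algebraic endpoint singularity — a standard Erd\'elyi-type endpoint computation), and $R_N(u)=O(|u|^{-N})$ with the analogous bounds on its derivatives. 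In particular $|\Phi_n(u)|\le C(1+|u|)^{-(m-1)/2}$; the oscillatory factors $e^{\pm i\sigma}$ are the essential feature I will use below.

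For the main step, for $w\ge1$ I would substitute $s=\sigma/w$ to obtain
$$g_{r,t}^{(n)}(w)=\frac{i^n}{w^{\,m+n-1}}\int_0^\infty \sigma^{m+n-2}\Big(t+\tfrac{\sigma^2}{w^2}\Big)^{r/2}e^{-\sigma^2/w^2}\,\Phi_n(\sigma)\,d\sigma ,$$
which exhibits the claimed power $w^{-(m+n-1)}$ explicitly, so it suffices to bound the remaining integral $B(w,t)$ uniformly for $w\ge1$ and $t$ in a compact set. On $\sigma\le1$ the integrand is dominated by $C\sigma^{m+n-2}$, which is integrable. On $\sigma\ge1$ I would insert the expansion above: choosing $N>m+n-1$ makes the remainder $R_N$ absolutely integrable against $\sigma^{m+n-2}$, while each explicit term $c_k\int_1^\infty\sigma^{m+n-2-k}\rho_{w,t}(\sigma)e^{\pm i\sigma}\,d\sigma$, with $\rho_{w,t}(\sigma)=(t+\sigma^2/w^2)^{r/2}e^{-\sigma^2/w^2}$, I would integrate by parts in $\sigma$, transferring $e^{\pm i\sigma}$ onto the slowly varying weight $\rho_{w,t}$ and the decaying power $\sigma^{m+n-2-k}$. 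Each step lowers the $\sigma$-power by one; the boundary term at infinity vanishes thanks to the Gaussian; the boundary term at $\sigma=1$ is $O(\rho_{w,t}(1))=O((T+1)^{r/2})$ uniformly; and every $\sigma$-derivative of $\rho_{w,t}$ carries a factor $O(w^{-1})$, so those contributions are harmless and uniform in $w$. After finitely many steps the integrand is absolutely integrable, giving $|B(w,t)|\le C$.

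The step I expect to be the genuine obstacle is this last uniform bound on the tail: $\int_1^\infty\sigma^{m+n-2}\Phi_n(\sigma)\,d\sigma$ is \emph{not} absolutely convergent (its integrand is $\sim\sigma^{(m-3)/2+n}$ times an oscillation), and indeed it fails to converge at all without the Gaussian regulator, so no estimate by absolute values can succeed. It is precisely the oscillation $e^{\pm i\sigma}$ of $\Phi_n$, played against the Gaussian $e^{-\sigma^2/w^2}$ (which both annihilates the endpoint at infinity and remains slowly varying on the scale $w$), that produces a bound uniform in $w$. Once this is in place, the dependence on $t$ and $r$ is routine: they enter only through $(t+\sigma^2/w^2)^{r/2}$, bounded on $t\in[0,T]$ by $(T+\sigma^2)^{r/2}\le C_{T,r}\,e^{\sigma^2/2}$ and absorbed by the Gaussian, which gives the asserted uniformity in $t$ on compact sets (and local uniformity in $r$).
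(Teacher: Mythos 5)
Your argument is correct in outline but follows a genuinely different route from the paper's. The paper never differentiates under the integral: it converts $(d/dw)^n$ into $w^{-n}$ times $s$-derivatives via the identity $(d/dw)^n e^{isw\lambda}=w^{-n}\sum_j n_j(sD)^j e^{isw\lambda}$, then gains the remaining factor $w^{-(m-1)}$ by integrating by parts $2l$ times with the mixed operator $[(-w^2\lambda s)^{-1}\partial_s\partial_\lambda]^l$, exploiting the special identity $[-(d/d\lambda)\lambda^{-1}]\lambda(1-\lambda^2)^{k/2}=k\lambda(1-\lambda^2)^{(k-2)/2}$; this forces a case split on the parity of $m$ and a final $\sin(ws)/s$ manipulation to get boundedness of the last integral. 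You instead differentiate directly (paying powers of $s$ that the Gaussian absorbs), isolate the one-dimensional amplitude $\Phi_n$, invoke Erd\'elyi-type endpoint asymptotics for it, and rescale $s=\sigma/w$ so that the exact power $w^{-(m+n-1)}$ appears up front, reducing everything to a non-stationary-phase bound on the tail. Your version is more modular and makes transparent \emph{why} the exponent is $m+n-1$ (it is the $(m-1)/2$ endpoint decay of $\Phi_n$ twice over, combined with the scaling weight $\sigma^{m+n-2}$), at the cost of importing the endpoint expansion with uniform remainder bounds as a black box; the paper's version is self-contained and purely algebraic but more opaque. Two points in your final step deserve tightening. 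First, the claim that the terms where $\partial_\sigma$ hits $\rho_{w,t}$ are ``harmless'' because $\rho_{w,t}'=O(w^{-1})$ is not by itself enough: for instance $\int_1^\infty\sigma^a w^{-1}e^{-\sigma^2/(2w^2)}\,d\sigma\sim w^{a}$ for $a>0$, so these terms are not yet uniformly bounded after one step and must themselves be integrated by parts repeatedly; since $\rho_{w,t}(\sigma)=\tilde\rho(\sigma/w)$ with $\tilde\rho(x)=(t+x^2)^{r/2}e^{-x^2}$, each derivative does eventually trade a power of $\sigma$ or a power of $w^{-1}$, and finitely many iterations suffice, but that is the actual mechanism. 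Second, at $t=0$ the derivatives of $(t+\sigma^2/w^2)^{r/2}$ are only $O(\sigma^{r-j}w^{-r})$, not $O(w^{-1})$ --- still summable after enough integrations by parts, and consistent with the paper's own caveat about small $r$ at $t=0$, but worth stating since the proposition asserts uniformity for $t\in[0,T]$ including $t=0$.
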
 
 
 \begin{proof} 
 
 We easily derive $( d/dw)^ne^{isw\lambda} = w^{-n}\sum_{j=1}^n n_j (sD)^j e^{isw\lambda} $ where the $n_j$ are integers and $D = d/ds$. Integrating by parts in the $s$ integral 
 we obtain 
 $$ g_{r,t}^{(n)}(w)  = w^{-n} \int_{-1}^1 \int_0^\infty e^{isw\lambda} h(s) \lambda (1-\lambda^2)^{(m-3)/2} ds d\lambda$$
 where
 $$h(s) = \sum_{j=1}^nn_j (-Ds)^j [(t+s^2)^{r/2} s^{m-2} e^{-s^2}] $$
 and $D$ is $d/ds$.  In the following we use $|D^l (t+ s^2)^{r/2}| \le C_l (t+s^2)^{(r-l)/2}$ with $C_l = C_l(t)$ bounded for $t$ in compacts of $[0,\infty)$.
 
 We integrate by parts $2l$ times using $[(-w^2 \lambda s)^{-1} (\partial^2/\partial s\partial \lambda)]^l e^{isw\lambda} =  e^{isw\lambda} $ and $ [- (d/d \lambda)\lambda^{-1}]  \lambda(1-\lambda^2)^{k/2} = k \lambda (1-\lambda^2)^{(k-2)/2}$.  We obtain
 \be \label{ltimes}
 g_{r,t}^{(n)}(w) = w^{-2l-n} c_{l,m}\int_0^\infty\int_{-1}^1 e^{isw\lambda} (Ds^{-1})^l h(s)  \lambda (1-\lambda^2)^{- l + (m-3)/2 } d\lambda ds. 
 \ee

 If $m$ is odd we take $l = (m-3)/2$ and obtain
 
 $$g_{r,t}^{(n)} (w) = w^{-(m+n-3)}c_m\int_0^\infty\int_{-1}^1 e^{isw\lambda} (Ds^{-1})^{(m-3)/2} h(s) \lambda d\lambda ds.$$
 
 Integrating by parts once more we have
  $$g_{r,t}^{(n)}(w) = 2iw^{-(m+n -1)}c_m \int_0^\infty \frac{\sin(ws)}{s}  D(Ds^{-1})^{(m-3)/2} h(s) ds.$$
  
  To see that the above integral is bounded uniformly in $w$ we write $\sin(ws)/s = (d/ds) \int_0^{ws} \frac {\sin \tau}{\tau} d\tau$ and integrate by parts one final time to obtain
  
  \begin{equation}\label{grt}
  g_{r,t}^{(n)}(w) =- 2iw^{-(m+n-1)} c_m\int_0^\infty (\int_0^{sw} \frac{\sin\tau}{\tau} d\tau)  D^2(Ds^{-1})^{(m-3)/2} h(s)ds
 \end{equation}
 If $r=0$ then $ D^2(Ds^{-1})^{(m-3)/2} h(s)  = $ (polynomial) $e^{-s^2}$.
 
 If $r> 0$ and $t=0$ terms of the form $ s^{-1 + r}  e^{-s^2}$ arise which look singular for small $r$ at $s = 0$.  This is because we are not keeping track of coefficients.  We do not lose any information if we keep $r$ away from $0$ since bounds on $||(I - \Delta)^{r/2} f||_1 $ for small $r\ge 0 $ follow from those for larger $r$.  It is not hard to see from (\ref{grt}) that $g_{r,t}^{(n)}(w)  = O(w^{-(m +n -1)}) $ uniformly for $t$ in compact subsets of $[0,\infty)$.  To see the situation more explicitly it might help to note that $(Ds^{-1})^l = \sum_{j=1}^l m_j D^j s^{-(2l - j)}$ for certain integers $m_j$.

 Now we turn to even $m$.  Using (\ref{ltimes}) with $l=(m-2)/2$ we find
 $$ g_{r,t}^{(n)} (w) = w^{-(m +n-2) }c_m' \int_0^\infty\int_{-1}^1 e^{isw\lambda} (Ds^{-1})^{(m-2)/2} h(s)  \lambda (1-\lambda^2)^{- 1/2} d\lambda ds .$$
 
Another integration by parts in the $s$ integral gives 

$$ g_{r,t}^{(n)}(w) = w^{-(m+n-1)}ic_m' \int_0^\infty\int_{-1}^1 e^{isw\lambda} D(Ds^{-1})^{(m-2)/2}h(s)(1-\lambda^2)^{- 1/2} d\lambda ds + $$
 $$ + w^{-(m +n-1)} i c_{m}' [(Ds^{-1})^{(m-2)/2} h(s)]|_{s=0} \int_{-1}^1(1-\lambda^2)^{-1/2} d\lambda$$
 
 so that $g_{r,t}^{(n)}(w) = O(w^{-(m+n -1)})$.  From (\ref{grtdef}) it is clear that  $g_{r,t}^{(n)}$ is bounded for all $r\ge 0$ uniformly for $t$ in compacts of $[0,\infty)$.
  \epf

$\blacksquare$

DEPARTMENT OF MATHEMATICS, UNIVERSITY OF VIRGINIA, CHARLOTTESVILLE, VA 22904.\\
\emph{E-mail address:  iwh@virginia.edu}


\begin{thebibliography}{99}
\bibitem {JL} Jean Leray,  \emph{Sur le mouvement d'un liquide visqueux emplissant l'espace}, Acta Mathematica, $\mathbf{63}$, (1934), 193--248. 
\bibitem{RRS} James Robinson, Jos\'{e} Rodrigro, Witold Sadowski, \emph{The three-dimensional Navier-Stokes equations}, Cambridge University Press, (2016).
\bibitem{K} Tosio Kato, \emph{Non-self-adjoint operators with singular first order coefficients}, Proc. Roy. Soc. Edinburgh, $\mathbf{96A}$, (1984), 323--329.
\bibitem{M} Solomon Mikhlin, \emph{Multidimensional singular integrals and integral equations}, International Series of Monographs in Pure and Applied Mathematics, vol. $\mathbf{83}$, (1965).
\bibitem {EH} Eberhard Hopf, \emph{\"{U}ber die Anfangswertproblem f\"{u}r die hydrodynamischen Grundgleichungen}, Math. Nachr., $\mathbf{4}$, (1951), 213--231.
\bibitem{St} Elias Stein, \emph{Singular integrals and differentiability properties of functions}, Princeton University Press, (1970). 
\bibitem{Si} Barry Simon, \emph{A Comprehensive Course in Analysis, Part 3, Harmonic Analysis}, Amer. Math. Soc., (2015).
\bibitem{FLRT} G. Furioli, P.G. Lemari\'{e}-Rieusset, E. Terraneo, \emph{Unicit\'{e} dans $L^3(\mathbb{R}^3)$ et d'autres espaces limites pour Navier-Stokes}, Revista Mat. Iberoamer, $\mathbf{16}$, (2000), 605--667.
\bibitem{LR} P. G. Lemari\'{e}-Rieusset, \emph{Recent developments in the Navier-Stokes problem}, Chapman and Hall/CRC, London, (2002).
\end{thebibliography}
\end{document}